\newcounter{theoremc}
\newcounter{corollc}
\newcounter{propc}
\newtheorem{theorem}[theoremc]{Theorem}
\newtheorem*{theorem*}{Theorem}
\newtheorem*{defn*}{Definition}
\newtheorem*{prob*}{Problem}
\newtheorem*{lemma*}{Lemma}
\newtheorem*{conj*}{Conjecture}
\newtheorem*{corol*}{Corollary}
\newtheorem*{conclusion*}{Conclusion}
\newtheorem*{proposition*}{Proposition}
\newtheorem{proposition}[propc]{Proposition}
\newtheorem{corollary}[corollc]{Corollary}
\newtheorem*{remark*}{Remark}
\title{Littlewood-Richardson coefficients as a signed sum of Kostka numbers}
\author[S.~Shrivastava]{Sagar Shrivastava}
\address{School of mathematics, 
Tata Institute of Fundamental Research, 
Homi Bhabha Road, Mumbai 400005, India}
\email{sagars@math.tifr.res.in}
\date{}
\begin{document}

\maketitle

\begin{abstract}
Littlewood-Richardson (LR) coefficients and Kostka Numbers appear in representation theory and combinatorics related to $GL_n$. It is known that Kostka numbers can be represented as special Littlewood-Rischardson coefficient \cite{king2004stretched}. In this paper, we show how one can represent LR coefficient as a signed sum of Kostka numbers, and use the formulation to give a polynomial time algorithm for the same, hence showing that they belong to the same class of decision problems. As a corollary, we will prove Steinberg's formula \cite{steinberg1961general} using Kostant's partition function.
\end{abstract}

\subsection{\centering Introduction}
Littlewood-Richardson(LR) coefficients can be thought of as the structure constants for the ring of Symmetric functions (the Grothiendick ring of characters for $GL_n$). Infact we have the following definition of the LR coefficients:

$$ S_{\lambda} S_{\mu} = \sum_{\nu} c_{\lambda \mu}^{\nu} S_{\nu},$$
where $S_{\lambda},S_{\mu},S_{\nu}$ are the Schur polynomials corresponding to the highest weights $\lambda,\mu, \nu$ respectively, and the LR coefficient $c_{\lambda \mu}^{\nu}$ can be seen as the multiplicity of $S_{\nu}$ in the product decomposition of $S_{\lambda},S_{\mu}$.

 Kostka numbers $K_{\lambda \mu}$ are defined as the number of ways one can fill the boxes of the Young diagram of $\lambda$ with $\mu_1 \,1's, \mu_2 \,2's$ upto $\mu_n\,n's$, in such a way that the entries in each row are non decreasing and in each column are strictly increasing (A semistandard tableau on $\lambda$ of type $\mu$.)

In \cite{king2004stretched}, it is shown that Kostka numbers can be viewed as a special type of LR coefficients. To be precise, $K_{\lambda \mu}= c_{\sigma \lambda}^{\tau}$, where
\[
  \begin{cases}
    \tau_i = \mu_i + \mu_{1+1} + \cdots     \\
    \sigma_i = \mu_{i+1} + \mu_{i+2} + \cdots
  \end{cases}
  \quad for \quad i=1,2,\ldots
\]

In the course of this paper, we will prove certain necessary and sufficient condition for $K_{\lambda \mu} \neq 0$ (see Section 2, Proposition \ref{th:1}). The following is the main theorem of this paper, which will be proven in Section 3:

\begin{theorem}\label{th:0}
 $$S_{\lambda} S_{\mu} = \sum_{\xi \trianglelefteq \mu} K_{\mu \xi} (-1)^{\eta} \sum_{\sigma \in S_n/G(\xi)} S_{\eta(\lambda + \sigma(\xi) + \rho) -\rho} $$
 
where $G(\xi)$ is the stabalizer of $\xi$ in $S_n$, $\rho$ is the half sum of positive roots, $(-1)^{\eta}$ is the sign of $\eta \in S_n$ where $\eta \in S_n$ is the unique permutation which brings $\lambda + \sigma(\xi) + \rho$ to the positive Weyl chamber.
\end{theorem}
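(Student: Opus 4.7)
The strategy will be a Racah--Speiser / Brauer--Klimyk reduction: one expands $S_\lambda S_\mu$ as a sum over the weights of the second factor and then folds the resulting expression back onto the dominant chamber via Weyl's sign rule. I would begin by recalling Weyl's character formula $S_\lambda = A_{\lambda+\rho}/A_\rho$, where $A_\nu := \sum_{\sigma \in S_n} (-1)^\sigma\, x^{\sigma(\nu)}$, and expanding the second factor in its weight basis as $S_\mu = \sum_v m_\mu(v)\, x^v$, with $m_\mu(v)$ the weight multiplicity of $v$ in the irreducible $GL_n$-module of highest weight $\mu$.

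The first step is a short computation. Multiplying $A_{\lambda+\rho}$ by $S_\mu$ produces a double sum indexed by $\sigma \in S_n$ and by a weight $v$; combining the Weyl invariance $m_\mu(\sigma v) = m_\mu(v)$ with the substitution $v \mapsto \sigma(v)$ inside the inner sum collapses it to $\sum_v m_\mu(v)\, A_{\lambda+v+\rho}$. Dividing by $A_\rho$ yields the Brauer--Klimyk identity
$$ S_\lambda S_\mu \;=\; \sum_v m_\mu(v)\, \frac{A_{\lambda+v+\rho}}{A_\rho}. $$
Each summand is then evaluated through the standard sign rule: $A_\nu$ vanishes whenever $\nu$ is fixed by a nontrivial element of $S_n$, and otherwise the quotient equals $(-1)^\eta\, S_{\eta(\lambda+v+\rho)-\rho}$, with $\eta$ the unique permutation sending $\lambda+v+\rho$ into the strictly dominant chamber.

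To conclude, I would repartition the weights $v$ of $V_\mu$ by their $S_n$-orbits. Each orbit has a unique dominant representative $\xi$, with $m_\mu(\xi) = K_{\mu\xi}$; by Proposition \ref{th:1} this multiplicity is nonzero precisely when $\xi \trianglelefteq \mu$, which justifies the range of the outer sum. The orbit of $\xi$ is enumerated by cosets $\sigma \in S_n/G(\xi)$, and Weyl invariance gives $m_\mu(\sigma\xi) = K_{\mu\xi}$ for every such coset. Substituting and collecting terms produces exactly the identity of Theorem \ref{th:0}.

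The main obstacle will be the bookkeeping around the wall case: the statement writes $(-1)^\eta$ uniformly even though strictly speaking $\eta$ is defined only when $\lambda+\sigma(\xi)+\rho$ is regular. I would need to confirm that the convention in which singular weights contribute zero is the intended reading of the formula, and also verify the change-of-variables step carefully so that contributions originating in distinct Weyl orbits of $V_\mu$ are not conflated under the stabilizer $G(\xi)$.
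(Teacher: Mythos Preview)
Your argument is correct, but it follows a genuinely different route from the paper's. You proceed via the Brauer--Klimyk/Racah--Speiser mechanism: expand $S_\mu$ in its weight basis, multiply by $A_{\lambda+\rho}$, use $S_n$-invariance of the weight multiplicities to collapse to $\sum_v m_\mu(v)\,A_{\lambda+v+\rho}/A_\rho$, and then group the weights by $S_n$-orbits, invoking $m_\mu(\xi)=K_{\mu\xi}$ for the dominant representative. The paper instead works with two alphabets. It multiplies $S_\lambda(\mathbf{x})$ by the Cauchy kernel $\prod_{i,j}(1-x_iy_j)^{-1}=\sum_\nu S_\nu(\mathbf{x})S_\nu(\mathbf{y})$, absorbs the product into the numerator determinant for $S_\lambda$, expands each entry via $\prod_k(1-x_iy_k)^{-1}=\sum_r x_i^{\,r}H_r(\mathbf{y})$, and uses multilinearity to obtain
\[
\sum_{(k_1,\ldots,k_n)} H_{k_1}\cdots H_{k_n}(\mathbf{y})\,\frac{\det\bigl(x_i^{\lambda_j+k_j+n-j}\bigr)}{\det\bigl(x_i^{n-j}\bigr)}.
\]
After reindexing by partitions $\xi$ and their $S_n$-orbits and writing $H_\xi=\sum_\nu K_{\nu\xi}S_\nu$, one reads off the coefficient of $S_\mu(\mathbf{y})$. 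Your approach is shorter and more transparently representation-theoretic, with the Kostka numbers entering directly as weight multiplicities; the paper's approach stays entirely within symmetric-function identities (Cauchy, the generating series for $H_r$, and the $H_\xi$ expansion) and never invokes Brauer--Klimyk, which may appeal to readers without a Lie-theoretic background. Your two stated worries are harmless: the paper adopts exactly the zero-on-walls convention you anticipate (terms with $\det=0$ are simply dropped), and its $S_n/G(\xi)$ indexing is precisely the orbit parametrization you describe, so no conflation between distinct orbits occurs.
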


As a corollary to this theorem, we get the following representation of LR coefficients in terms of Kostka numbers:

\begin{corollary}
 With the above notation, we have that
  $$c_{\lambda \mu}^{\nu} = \sum_{\substack{\eta \in S_n\\\psi \trianglelefteq \mu  }} (-1)^{\eta} K_{\mu \psi}$$
 where  $\psi =\tau(\eta(\nu + \rho) - (\lambda +\rho))$, $\tau \in S_n$ is the permutation bringing the touple $(\eta(\nu + \rho) - (\lambda +\rho))$ in the Weyl chamber to the positive Weyl chamber.
\end{corollary}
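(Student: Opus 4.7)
The plan is to extract the coefficient of $S_\nu$ from Theorem \ref{th:0} and reorganize the resulting sum. A summand $(-1)^\eta S_{\eta(\lambda + \sigma(\xi) + \rho) - \rho}$ on the right-hand side contributes to $S_\nu$ precisely when $\eta(\lambda + \sigma(\xi) + \rho) = \nu + \rho$, equivalently $\sigma(\xi) = \eta^{-1}(\nu + \rho) - (\lambda + \rho)$. Since $\nu + \rho$ has strictly distinct coordinates, once $\eta$ is fixed this equation determines the pair $(\xi,\sigma G(\xi))$ uniquely whenever a solution exists: $\xi$ must be the dominant rearrangement of the tuple $\eta^{-1}(\nu+\rho) - (\lambda+\rho)$ and $\sigma$ is a permutation realizing that rearrangement.

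First I would reindex the double sum in Theorem \ref{th:0} by $\eta \in S_n$ rather than by the pair $(\xi,\sigma)$. For each $\eta$, the contribution to the coefficient of $S_\nu$ is $(-1)^\eta K_{\mu\xi}$, provided the associated $\xi$ is a genuine partition satisfying $\xi \trianglelefteq \mu$; otherwise either no such $\xi$ exists, or $K_{\mu\xi}=0$ by Proposition \ref{th:1}, so the term drops out. Then I would substitute $\eta \mapsto \eta^{-1}$, which preserves both the sign and the summation range, converting the defining condition into $\psi = \tau(\eta(\nu + \rho) - (\lambda + \rho))$ with $\tau$ bringing the tuple into the positive Weyl chamber. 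This matches the formulation of the corollary exactly.

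The main subtlety, rather than a deep obstacle, is the bookkeeping of degenerate cases. A negative coordinate in $\eta(\nu + \rho) - (\lambda + \rho)$ means no valid partition $\psi$ exists and the term is zero by convention. Repeated entries in the tuple mean $\psi$ has a non-trivial stabilizer, but this is already compensated by the quotient $S_n/G(\xi)$ in Theorem \ref{th:0}, so no double-counting occurs. A violation of $\psi \trianglelefteq \mu$ forces $K_{\mu\psi}=0$ via Proposition \ref{th:1}. With these conventions in place, the surviving contributions sum to precisely the expression stated in the corollary.
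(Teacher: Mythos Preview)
Your proposal is correct and follows essentially the same route as the paper: solve $\nu=\eta(\lambda+\sigma(\xi)+\rho)-\rho$ for $\sigma(\xi)=\eta^{-1}(\nu+\rho)-(\lambda+\rho)$, reindex the sum by $\eta$, and let $\tau$ sort the resulting tuple into a partition $\psi$. If anything, your treatment is more explicit than the paper's about the substitution $\eta\mapsto\eta^{-1}$ and about why degenerate tuples (negative entries, or $\psi\not\trianglelefteq\mu$) contribute nothing.
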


Using this corollary, we will prove Stienberg's formula \cite{steinberg1961general}, by writing Kostka numbers in terms of Kostant partition functions ($\mathfrak{P}$):

$$c_{\lambda \mu}^{\nu} = \sum_{\sigma, \tau \in S_n} (-1)^{\sigma \tau} \mathfrak{P}(\sigma(\lambda +\rho) + \tau (\mu +\rho) - (\nu +2 \rho))$$

In Section 4, we give the algorithm which allows us to compute LR coefficents in polynomial time given that we have a black box that provides us with the values of Kostka numbers in polynomial time, hence showing that computing Kostka numbers and LR coefficents are in the same class of decision problems.

\subsection{\centering Preliminaries and Notation}
By a partition $\lambda$, we refer to a touple of non-negative integers $(\lambda_1, \lambda_2,\ldots, \lambda_n)$, where $\lambda_1 \geq \lambda_2 \geq \cdots \geq \lambda_n \geq 0$. Denote the content of $\lambda$ by $|\lambda| := \sum_i \lambda_i$. We use the term partitions and highest weight interchangably. Given partitions $\lambda_1,\lambda_2, \ldots , \lambda_k$, $\mathbf{size}(\lambda_1,\lambda_2, \ldots , \lambda_k)$ denotes the number of bits used in the description of this touple. Another way to look at a partition is via \textbf{Young tableau/diagram}. A Young tableau corresponding to a partition $\lambda$ (called a Young tableau of shape $\lambda$) is a collection of boxes, arranged in left justified rows, such that the first row has $\lambda_1$ boxes, second row has $\lambda_2$ boxes, and similarly the $n^{th}$ row had $\lambda_n$ boxes ( the number of boxes per row is weakly monotonically decreasing as $\lambda_1 \geq \lambda_2 \geq \cdots \geq \lambda_n \geq 0$). For example, $\lambda =(5,3,2)$ , the corresponding Young tableau is 
\begin{center}
\ydiagram{5,3,2}
\end{center}

A semistandard Young tableau of shape $\lambda$ is a Young diagram of shape $\lambda$ in which the boxes are filled with integers in such a way that the entries in each row are non decreasing and in each column are strictly increasing.

Kostka numbers $K_{\lambda \mu}$ are defined as the number of ways one can fill the boxes of the Young diagram of shape $\lambda$ with $\mu_1 \,1's, \mu_2 \,2's$ upto $\mu_n\,n's$ such that we get semistandard Young tableaus called semistandard tableau on $\lambda$ of type $\mu$). For $\lambda= (5,3,2)$ and $\mu = (4,3,3)$, the following are the only two semistandard Young tableau with shape $\lambda$ of type $\mu$:

\begin{center}
 \ytableaushort{11112,223,33} \hspace{2cm}
\ytableaushort{11113,222,33}

\end{center}
 which shows that $K_{(5,3,2) (4,3,3)} = 2$.
 
 We will hereby follow the notion that $\mathbf{x}$ would refer to the touple $(x_1,x_2,\ldots,x_n)$, and similarly $\mathbf{y}$ refers to $(y_1,y_2,\ldots,y_n)$. We recall that for $\lambda =(\lambda_1, \lambda_2,\ldots, \lambda_n)$, the Schur polynomial $S_{\lambda}$ is defined by the following ratio of alternating polynomials:
$$S_{\lambda}(\mathbf{x}) = \frac{\det (x_i^{\lambda_j+n-j})}{\det (x_i^{n-j})}.$$

The Schur polynomial $S_{\lambda}$ corresponds to the character of $\mathbf{S}^{\lambda}$, the Schur module which is the irreducible representation of $GL_{n}(\mathbb{C})$ with highest weight $\lambda$. By varying over all partitions with atmost $n$ parts ($n$ rows in the corresponding young diagram), Schur polynomials gives a basis for the ring of symmetric functions on $n$ variables($\mathbb{C}[x_1,x_2,\ldots,x_n]^{S_n}$).

LR coefficients are the structure constants in the ring of symmetric functions (the Grothiendick ring of characters for $GL_n$). For partitions $\lambda, \mu, \nu$, such that $|\nu| = |\lambda|+|\mu|$, we have that

$$ S_{\lambda} S_{\mu} = \sum_{\nu} c_{\lambda \mu}^{\nu} S_{\nu},$$

Equivalently, one can state the character (symmetric function) version with the notion that $\langle,\rangle$ is the Hall inner product on the ring $\mathbb{C}[x_1,x_2,\ldots,x_n]^{S_n}$.

$$c_{\lambda \mu}^{\nu} = \langle S_{\lambda} S_{\mu}, S_{\nu}\rangle.$$ 

Equivalently, one can define Kostka numbers as 
$$H_{\mu} = H_{\mu_1} H_{\mu_2} \ldots H_{\mu_n} = \sum_{\lambda} K_{\lambda \mu} S_{\lambda},$$

\noindent where $H_k$ correspont to the $k^{th}$ complete symmetric polynomial . We have the following generating function for $H_k$:

$$\prod_{i=1}^n \frac{1}{1-tx_i} = \sum_{k=0}^{\infty} t^k H_k(\mathbf{x}).$$


We make use of the following identity which can be seen in Appendix A of \cite{fulton1991first}.

\begin{theorem*}[Cauchy's Identity]
$$\prod_{1 \leq i,j \leq n}\frac{1}{ (1- x_i y_j)} = \sum_{\mu} S_{\mu}(\mathbf{x}) S_{\mu}(\mathbf{y}),$$
where the sum is over all partitions $\lambda$ with atmost $n$ terms.
\end{theorem*}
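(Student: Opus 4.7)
The plan is to reduce Cauchy's identity to the classical \emph{Cauchy determinant} evaluation together with the Cauchy--Binet formula, working entirely from the bialternant definition $S_\mu(\mathbf{x})=\det(x_i^{\mu_j+n-j})/\det(x_i^{n-j})$ recalled in the Preliminaries. First I would establish the Cauchy determinant identity
$$\det\!\left(\frac{1}{1-x_iy_j}\right)_{1\le i,j\le n} \;=\; \frac{\prod_{i<j}(x_i-x_j)(y_i-y_j)}{\prod_{i,j}(1-x_iy_j)},$$
either by induction on $n$ after clearing the first row/column by elementary operations, or by comparing both sides as rational functions on a common denominator (matching zero set $\{x_i=x_j\}\cup\{y_i=y_j\}$ and normalisation). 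Rearranging this identity, the left-hand side of Cauchy's identity equals $\det(1/(1-x_iy_j))$ divided by the product $\det(x_i^{n-j})\det(y_i^{n-j})$ of the two Vandermonde determinants.

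Next I would expand each matrix entry as a geometric series $\frac{1}{1-x_iy_j}=\sum_{k\ge 0}x_i^ky_j^k$ and apply Cauchy--Binet (treating the sums as formal power series in the $x_i,y_j$, where convergence is not an issue):
$$\det\!\left(\sum_{k\ge 0}x_i^ky_j^k\right) \;=\; \sum_{0\le k_1<k_2<\cdots<k_n}\det(x_i^{k_\ell})_{i,\ell}\,\det(y_j^{k_\ell})_{j,\ell}.$$
Strictly increasing sequences $0\le k_1<\cdots<k_n$ are in bijection with partitions $\mu$ of length at most $n$ via $k_\ell=\mu_{n+1-\ell}+(\ell-1)$. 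Reversing the column order in each determinant turns the exponents into $\mu_j+n-j$ and introduces a factor $(-1)^{\binom{n}{2}}$; since this happens on both the $x$-- and $y$--determinants the two signs cancel, and I obtain
$$\det\!\left(\frac{1}{1-x_iy_j}\right)=\sum_{\mu:\,\ell(\mu)\le n}\det(x_i^{\mu_j+n-j})\,\det(y_i^{\mu_j+n-j}).$$

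Finally, dividing by $\det(x_i^{n-j})\det(y_i^{n-j})$ and recognising each ratio as a Schur polynomial via the bialternant formula gives $\prod_{i,j}1/(1-x_iy_j)=\sum_\mu S_\mu(\mathbf{x})S_\mu(\mathbf{y})$, with the sum over partitions with at most $n$ parts, as required. The main obstacle is not conceptual but purely bookkeeping: one must check that (i) the column-reversal signs on the $x$-- and $y$--determinants cancel exactly, and (ii) the parametrization $k\leftrightarrow\mu$ picks out precisely the partitions with $\ell(\mu)\le n$ (immediate, since any strictly increasing sequence of length $n$ in $\mathbb{Z}_{\ge 0}$ uniquely determines such a $\mu$). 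With these points handled, the proof is a short chain of substitutions, and Cauchy's identity falls out of the Cauchy determinant together with Cauchy--Binet.
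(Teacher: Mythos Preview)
Your argument is correct: the Cauchy determinant evaluation, Cauchy--Binet expansion, and the reindexing $k_\ell=\mu_{n+1-\ell}+(\ell-1)$ together give exactly the bialternant sum, and the two column-reversal signs $(-1)^{\binom{n}{2}}$ do cancel as you say. There is no gap.

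As for comparison with the paper: there is nothing to compare against. The paper does \emph{not} prove Cauchy's identity; it merely quotes it as a known input, referring the reader to Appendix~A of Fulton--Harris. So your proposal is not an alternative route to the paper's proof but rather a proof where the paper offers none. For what it is worth, the argument you outline --- Cauchy determinant plus Cauchy--Binet, then recognising the bialternant --- is essentially the classical proof one finds in that reference, so you are supplying precisely the argument the paper points to.
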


We prove the following Proposition, giving a criterion for non-vanishing of Kostka numbers that we would require in the main result (this can be seen as Exercise A.11 in \cite{fulton1991first}, but for better lack of reference, we give a complete proof here) :

\begin{proposition}\label{th:1}
 $K_{\mu \xi} \neq 0 $ \emph{iff} $|\xi| = |\mu|$ and $\xi_1 +\xi_2 + \cdots +\xi_i \leq \mu_1 +\mu_2 +\cdots + \mu_i$ for all $i\geq1$.
\end{proposition}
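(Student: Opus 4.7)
The necessity direction is structural. Given an SSYT $T$ of shape $\mu$ and content $\xi$, let $\mu^{(i)}$ denote the subdiagram of cells whose entry is at most $i$. Row-weak and column-strict monotonicity of $T$ force $\mu^{(i)}$ to be a genuine partition sitting inside $\mu$, and column-strictness further confines every entry $\le i$ to the first $i$ rows (otherwise the $i$ strictly smaller entries above it in its column would require more than $i-1$ values from $\{1,\ldots,i-1\}$). Hence $\mu^{(i)}$ has at most $i$ parts, and
\[ \xi_1+\cdots+\xi_i \;=\; |\mu^{(i)}| \;=\; \mu^{(i)}_1+\cdots+\mu^{(i)}_i \;\le\; \mu_1+\cdots+\mu_i, \]
which is the dominance inequality; $|\xi|=|\mu|$ is immediate from content.

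For sufficiency I build an SSYT inductively on the number of nonzero parts of $\xi$. First note that $|\xi|=|\mu|$ combined with $\xi_1+\cdots+\xi_n\le\mu_1+\cdots+\mu_n$ forces $\mu$ to have at most $n$ parts. The crux is the following claim: if $\xi_n>0$, there exists a partition $\mu'\subseteq\mu$ such that $\mu/\mu'$ is a horizontal strip of size $\xi_n$ contained in the first $n$ rows and $(\xi_1,\ldots,\xi_{n-1})\trianglelefteq\mu'$. Filling $\mu/\mu'$ with $n$'s (column-strictness is guaranteed by the horizontal-strip condition $\mu'_i\ge\mu_{i+1}$) and invoking the induction hypothesis on $\mu'$ then produces the desired tableau.

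To build $\mu'$, set $a_i:=\mu_i-\mu'_i$; the horizontal-strip constraint reads $0\le a_i\le\mu_i-\mu_{i+1}$ with $\sum_i a_i=\xi_n$, while the residual dominance translates into $\sum_{k\le j}a_k\le D_j$, where $D_j:=(\mu_1+\cdots+\mu_j)-(\xi_1+\cdots+\xi_j)\ge 0$. I select $a_n,a_{n-1},\ldots,a_1$ greedily from below, each as large as the local cap and remaining budget allow; feasibility follows from the telescoping identity $\sum_{i=1}^n(\mu_i-\mu_{i+1})=\mu_1\ge\xi_1\ge\xi_n$. For the dominance bounds, the greedy rule yields $\sum_{k>j}a_k=\min(\xi_n,\mu_{j+1})$, so in the nontrivial case $\xi_n>\mu_{j+1}$ the required bound reduces to $\mu_1+\cdots+\mu_{j+1}\ge\xi_1+\cdots+\xi_j+\xi_n$, which is obtained by combining the dominance inequality at level $j+1$ with the monotonicity $\xi_{j+1}\ge\xi_n$. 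The main obstacle is precisely this bookkeeping: simultaneously respecting the per-row caps, the global budget $\xi_n$, and the full cascade of residual dominance inequalities. Once the greedy recipe is fixed, everything reduces to telescoping of $\mu_i-\mu_{i+1}$ and monotonicity of $\xi$.
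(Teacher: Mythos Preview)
Your proposal is correct and follows essentially the same route as the paper's proof. Both directions match: necessity via the observation that entries $\le i$ are confined to the first $i$ rows, and sufficiency by induction on the number of parts of $\xi$, where one greedily peels off a horizontal strip of size $\xi_n$ from the bottom of $\mu$ (your greedy choice $a_i=\min(\mu_i-\mu_{i+1},\text{remaining budget})$ is exactly the paper's ``fill row $m$, then $\mu_{m-1}-\mu_m$ cells of row $m-1$, \ldots'' and produces the identical $\mu'$), and the residual dominance $\xi'\trianglelefteq\mu'$ is verified in both cases by combining the level-$(j{+}1)$ dominance inequality with $\xi_{j+1}\ge\xi_n$.
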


If $\trianglelefteq$ denotes the dominance ordering on partions of of same size, then Lemma \ref{th:1} is equivalent to saying $K_{\mu \xi} \neq 0 \iff \xi \trianglelefteq \mu$

\begin{proof}
 If $K_{\mu \xi} \neq 0 $, then there is a semistandard tableau on $\mu$ of type $\xi$. In a semistandard tableau, a box with `i' can occur in atmost the $i^{th}$ row (as columns are strictly increasing).  There are $\xi_1 \,1's, \xi_2 \,2's,\ldots \xi_i \,i's, $ which can atmost occur upto the $i^{th}$ row of $\mu$ which immediately gives us the inequality 
 $$\xi_1 +\xi_2 + \cdots +\xi_i \leq \mu_1 +\mu_2 +\cdots + \mu_i.$$
 Also, $|\xi| = |\mu|$ is a necessary condition for the tableau to be semistandard.
 
For the converse, we have that $\xi \trianglelefteq \mu$ is given, we will use induction on the number of rows of $\xi$, to show that this gives us a semistandard tableau on $\mu$ of type $\xi$. If there is only one row in $\xi$, then there is exactly one row in $\mu$ and they have same lenght, so the base case is true.

Assume that if there are $k-1$ rows in $\eta$ and $\eta \trianglelefteq \lambda$, then there is a semistandard tableau on $\lambda$ of type $\eta$. Let $\xi, \mu$ be such that $\xi \trianglelefteq \mu$, and the number of rows in $\xi$ is $k$. By the inequality, we know that there exists an $r$, such that $\mu_{r+1} < \xi_k \leq \mu_r$. Let $m$ be the number of rows in $\mu$, we fill the tableau of $\mu$ from the bottom, going right to left, filling atmost $\mu_m$ $k's$ in the $m^{th}$ row, atmost $\mu_{m-1}-\mu_m$ $k's$ in the $(m-1)^{th}$ row, and continuing like this (exhaustively filling right to left, then going to the row above), we reach the $r^{th}$ row, where the $k's$ get exhausted. Consider $\mu= (14,11,6,5,3)$ and $\xi = (9,8,8,7,7)$, then $\xi \trianglelefteq \mu$, and the following Young's diagram shows how we are filling the seven $5's$

\begin{center}

\ydiagram[*(white) 5]
{14+0,10+1,5+1,3+2,0+3}
*[*(lightgray)]{14,11,6,5,3}
\end{center}

The boxes in gray are the boxes still empty in the tableau after filling $5's$,and can be seen as $(14,10,5,3)$. In general, the empty boxes in the tableau of $\mu$ can be seen as $\mu' = (\mu_1, \mu_2, \ldots , \mu_{r-1}, \mu_r -(\xi_k -\mu_{r+1}), \mu_{r+2},\ldots ,\mu_m)$. We claim that $\xi' \trianglelefteq \mu'$, where $\xi'= (\xi_1, \xi_2,\ldots, \xi_{k-1})$, which has exactly $k-1$ rows. Hence we can use induction hypothesis to get a semistandard tableau on $\mu$ of type $\xi$ ( as the numbers to be filled are all $\leq k-1$, the filled boxes filled with $k$ would keep the tableau semistandard). For $1\leq i \leq r-1$, we get that $\xi_1 +\xi_2 + \cdots +\xi_i \leq \mu_1 +\mu_2 +\cdots + \mu_i$ as $\xi \trianglelefteq \mu$.  Note that $\xi_{s} \geq \xi_k$ for all $s$. Hence for all $s$, such that $r \leq s \leq k-1$, we get that 

$$\xi_1 +\xi_2 + \cdots +\xi_s + \xi_k \leq \xi_1 +\xi_2 + \cdots +\xi_s + \xi_{s+1} \leq \mu_1 +\mu_2 +\cdots + \mu_s + \mu_{s+1}$$

\noindent which gives us $\xi_1 +\xi_2 + \cdots +\xi_s \leq \mu_1 +\mu_2 +\cdots + (\mu_r +\mu_{r+1}-\xi_k) + \mu_{r+2} + \cdots \mu_s + \mu_{s+1}$, proving the claim.

\end{proof}

Let $\Phi^{+} \subset \Phi$ be a set of positive roots in a root system. The Kostant partition function $\mathfrak{P}$ for $\Phi^+$ is defined via the relation

$$\prod_{\alpha \in \Phi^+} \frac{1}{1-\mathbf{e}^{\alpha}}= \sum_{\omega} \mathfrak{P}(\omega) \mathbf{e}^{\omega}$$ 

Note that $\mathbf{e}$ is just a formal variable in this context, but with this convention $\mathbf{e}^a \mathbf{e}^b = \mathbf{e}^{a+b}$ . With this notation in picture, one can show that Kostka numbers can be expressed as a signed sum of Kostant partition function. If $\lambda, \mu$ are partitions of $n$, we have the following relation:

$$K_{\lambda \mu} = \sum_{\sigma \in S_n} (-1)^{\sigma} \mathfrak{P}(\sigma(\lambda + \rho) - (\mu +\rho))$$

We refer the interested reader to \cite{MR2284278} for a complete proof of the above equality.

The class $\# P$ is the class of functions $f : \cup _{m \in \mathbb{N}} \{ 0,1\} \rightarrow \mathbb{Z}_{\geq 0}$ for which there exists a polynomial time Turing machine $M$ and a polynomial $p$ such that $(\forall m \in \mathbb{N}), (\forall x \in \{0,1\}^m), f(x) = |\{ y \in \{0,1\}^{p(m)}$ such that $M$ accepts $(x,y)\}|$. Many counting problems like number of integer points in a polytope are in $\#P$. A counting problem $X\in \#P$ is $\#P-$complete if given a black box that provides solutions to instances  of $X$ in polynomial time, any problem in the class $\#P$ can be solved in polynomial time.

\subsection{\centering Main Theorem}

We give a proof of Theorem \ref{th:0}: 

\begin{proof}
 We take the formal product:
 
 \begin{align}
  S_{\lambda}(\mathbf{x})  \sum_{\mu} S_{\mu}(\mathbf{x}) S_{\mu}(\mathbf{y}) &= S_{\lambda}(\mathbf{x}) \prod_{1 \leq i,j \leq n}\frac{1}{ (1- x_i y_j)}\label{eq:0} \\ 
  &= \frac{\det (x_i^{\lambda_j+n-j})}{\det (x_i^{n-j})} \prod_{1 \leq i,j \leq n}\frac{1}{ (1- x_i y_j)}\\
  &= \frac{1}{\det (x_i^{n-j})} \det \left(\frac{x_i^{\lambda_j+n-j}}{\prod_{k=1}^n (1- x_i y_k)} \right)\label{eq:1}
 \end{align}
 consider the $ij^{th}$ entry in the above determinant:
 \begin{align*}
  \frac{x_i^{\lambda_j+n-j}}{\prod_{k=1}^n (1- x_i y_k)}  &= x_i^{\lambda_j+n-j} \sum_{r=0}^{\infty} x_i^r H_r(\mathbf{y})\\
  &= \sum_{r=0}^{\infty} x_i^{(\lambda_j+r)+n-j} H_r(\mathbf{y})\\
 \end{align*}
so \eqref{eq:1} becomes
\begin{equation} \label{eq:2}
\frac{\det \left(\sum_{k_{ij}=0}^{\infty} x_i^{(\lambda_j+k_{ij})+n-j} H_{k_{ij}}(\mathbf{y})\right)}{\det (x_i^{n-j})}
\end{equation}
We make a choice of the indices $k_{ij} = k_j$ ( specializing with this choice allows us to expand the determinant along the rows and take $H_{k_j}(\mathbf{y})$ common from them). With this choice, we can write the $i^{th}$ row as
$$\left[\sum_{k_{ij}=0}^{\infty} x_i^{(\lambda_j+k_{ij})+n-j} H_{k_{ij}}(\mathbf{y})\right]_i =\sum_{k_{j}=0}^{\infty} H_{k_{j}}(\mathbf{y}) [x_i^{(\lambda_j+k_{j})+n-j}]_i $$
this along with the fact that deteminant is an $n$ linear function, we get that \eqref{eq:2} becomes:

\begin{equation} \label{eq:3}
\sum_{(k_1,k_2,\ldots,k_n) \in \mathbb{Z}_{\geq 0}^n} \prod_{j=1}^n H_{k_{j}} \frac{\det (x_i^{(\lambda_j+k_{j})+n-j})}{\det (x_i^{n-j})}  
\end{equation}

Given $\mathbf{k}=(k_1,k_2,\ldots,k_n) \in \mathbb{Z}_{\geq 0}^n$, we define $G(\mathbf{k}) = \lbrace \sigma \in S_n | \sigma(\mathbf{k}) = \mathbf{k} \rbrace$ as the stabalizer of $\mathbf{k}$, and then $\sigma(\mathbf{k})$ where $\sigma \in S_n/G(\mathbf{k})$ ranges over all possible permutations of $\mathbf{k}$ without double counting. With this, we reindex the sum to be over partitions, so \eqref{eq:3} becomes
\begin{equation} \label{eq:4}
\sum_{\xi} H_{\xi}(\mathbf{y}) \sum_{\sigma \in S_n/G(\xi)} \frac{\det (x_i^{\lambda_j+\xi_{\sigma(j)}+n-j})}{\det (x_i^{n-j})}  
\end{equation}

If for any $k \neq j$, if $\lambda_j+\xi_{\sigma(j)}+n-j = \lambda_k+\xi_{\sigma(k)}+n-k$, then $\det (x_i^{\lambda_j+\xi_{\sigma(j)}+n-j}) =0$, so for the ones that remain, we have that $\lambda_j+\xi_{\sigma(j)}+n-j$ are distinct for all $j$, hence there exists a unique $\eta \in S_n$ such that $\eta(\lambda + \sigma(\xi) + \rho)-\rho$ is a dominant weight ( $\eta$ acts to bring the touple inside the positive Weyl chamber). The action of $\eta$ will bring about a sign change as well:

\begin{align}
 \sum_{\xi} H_{\xi}(\mathbf{y}) \sum_{\sigma \in S_n/G(\xi)} \frac{\det (x_i^{\lambda_j+\xi_{\sigma(j)}+n-j})}{\det (x_i^{n-j})}   &= \sum_{\xi} H_{\xi}(\mathbf{y}) \sum_{\sigma \in S_n/G(\xi)} (-1)^{\eta}S_{\eta(\lambda + \sigma(\xi) + \rho)-\rho}(\mathbf{x})\\
 &= \sum_{\xi} \sum_{\nu} K_{\nu \xi} S_{\nu}(\mathbf{y}) \sum_{\sigma \in S_n/G(\xi)} (-1)^{\eta} S_{\eta(\lambda + \sigma(\xi) + \rho)-\rho} (\mathbf{x}) \label{eq:5}
\end{align}
 By Proposition \ref{th:1}, we get that $K_{\nu \xi} \neq 0 $ iff $\xi \trianglelefteq \nu$, so \eqref{eq:5} becomes:
 \begin{equation}
   \sum_{\nu} S_{\nu}(\mathbf{y}) \left( \sum_{\xi \trianglelefteq \nu }K_{\nu \xi} \sum_{\sigma \in S_n/G(\xi)} (-1)^{\eta} S_{\eta(\lambda + \sigma(\xi) + \rho)-\rho}(\mathbf{x})\right)
 \end{equation}
 Comparing this with \eqref{eq:0} we get that the coefficient of $S_{\mu}(\mathbf{y})$ in both the equations must be the same, which gives us the following equality:
 $$  S_{\lambda}(\mathbf{x}) .S_{\mu}(\mathbf{x}) = \sum_{\xi \trianglelefteq \mu} K_{\mu \xi} (-1)^{\eta} \sum_{\sigma \in S_n/G(\xi)} S_{\eta(\lambda + \sigma(\xi) + \rho) -\rho} (\mathbf{x})$$
\end{proof}

\begin{corol*}
 With the above notation, we have that
  $$c_{\lambda \mu}^{\nu} = \sum_{\substack{\eta \in S_n\\\psi \trianglelefteq \mu  }} (-1)^{\eta} K_{\mu \psi}$$
 where  $\psi =\tau(\eta(\nu + \rho) - (\lambda +\rho))$, $\tau \in S_n$ is the permutation bringing the touple $(\eta(\nu + \rho) - (\lambda +\rho))$ in general position.
\end{corol*}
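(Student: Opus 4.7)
The plan is to extract the coefficient of $S_{\nu}(\mathbf{x})$ on both sides of the identity in Theorem \ref{th:0} and reindex the resulting sum. Since Schur polynomials form a basis for the ring of symmetric functions, $c_{\lambda\mu}^{\nu}$ equals the coefficient of $S_\nu$ in $S_\lambda S_\mu$, so the corollary reduces to a combinatorial rewriting of the right-hand sum.

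First, I would note that a term $S_{\eta(\lambda+\sigma(\xi)+\rho)-\rho}$ contributes to $S_\nu$ precisely when $\eta(\lambda+\sigma(\xi)+\rho) = \nu+\rho$, equivalently $\sigma(\xi) = \eta^{-1}(\nu+\rho) - (\lambda+\rho)$. The key step is then to reindex by $\eta \in S_n$: given such an $\eta$, set $v_\eta := \eta^{-1}(\nu+\rho) - (\lambda+\rho)$, let $\tau'$ be the permutation that sorts $v_\eta$ into a weakly decreasing tuple $\xi$, and take $\sigma := (\tau')^{-1} \in S_n/G(\xi)$, so that $\sigma(\xi) = v_\eta$. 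Because $\nu+\rho$ is strictly decreasing (a regular weight), the map $\eta \mapsto v_\eta$ is injective, so this construction yields a bijection between $S_n$ and the pairs $(\xi, \sigma)$ on the right-hand side of Theorem \ref{th:0} whose corresponding term equals $(-1)^\eta S_\nu$. Substituting $\eta \mapsto \eta^{-1}$ (a bijection of $S_n$ preserving sign) turns $v_\eta$ into $\eta(\nu+\rho)-(\lambda+\rho)$ and identifies $\tau'$ with the $\tau$ in the corollary, producing $\psi = \tau(\eta(\nu+\rho)-(\lambda+\rho))$.

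The point that requires care is legitimately extending the sum to all $\eta \in S_n$ despite the bijection only accounting for those $\eta$ for which $v_\eta$ has non-negative entries sorting to a $\xi$ with $\xi \trianglelefteq \mu$. I expect two degeneracies: first, if $v_\eta$ has a negative coordinate then $\psi$ is not a partition and $K_{\mu \psi}$ is zero by convention; second, if $v_\eta$ sorts to a valid partition $\psi$ but $\psi \not\trianglelefteq \mu$, then Proposition \ref{th:1} gives $K_{\mu \psi} = 0$. So the spurious $\eta$ contribute zero, their inclusion is harmless, and the sum as stated in the corollary is recovered exactly. The main obstacle is not deep but bookkeeping-heavy: carefully tracking the two permutations $\eta$ (acting on $\nu+\rho$) and $\tau$ (sorting the resulting tuple into the positive Weyl chamber) without conflating them, and verifying the sign $(-1)^\eta = (-1)^{\eta^{-1}}$ is correctly preserved under the relabeling.
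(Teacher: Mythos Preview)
Your proposal is correct and follows essentially the same approach as the paper: both extract the coefficient of $S_{\nu}$ from Theorem~\ref{th:0} by solving $\nu = \eta(\lambda+\sigma(\xi)+\rho)-\rho$ for $\sigma(\xi)$ and then reindexing the double sum over $(\xi,\sigma)$ by the single parameter $\eta\in S_n$. Your treatment is in fact more careful than the paper's, which silently passes from $\eta^{-1}(\nu+\rho)-(\lambda+\rho)$ to $\eta(\nu+\rho)-(\lambda+\rho)$ and does not spell out the degenerate cases you flag.
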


\begin{proof}
 We know from the theorem that 
 
 $$S_{\lambda}(\mathbf{x}) S_{\mu}(\mathbf{x}) = \sum_{\nu} c_{\lambda \mu}^{\nu} S_{\nu} (\mathbf{x})= \sum_{\xi \trianglelefteq \mu} K_{\mu \xi} (-1)^{\eta} \sum_{\sigma \in S_n/G(\xi)} S_{\eta(\lambda + \sigma(\xi) + \rho) -\rho}(\mathbf{x}) $$
 
 We solve for for $\xi$ in $\nu = \eta(\lambda + \sigma(\xi) + \rho) -\rho $ to get the coefficient of $S_{\nu}$. We get that $\sigma(\xi) = \eta^{-1}(\nu +\rho) - (\lambda +\rho)$, where $\sigma(\xi)$ is supposed to vary over all  permutations of $\xi$. We get that by varying $\eta \in S_n$, such that $\eta(\nu + \rho) - (\lambda +\rho)$ lies in the Weyl chamber, we can recover all the permutations of $\xi$ . There would be a unique choice of $\tau \in S_n$, such that $\psi =\tau(\eta(\nu + \rho) - (\lambda +\rho))$ is in the positive Weyl chamber, hence a dominant weight. With these notations, we get that the coefficient of $S_{\nu}$ is given by
 $$c_{\lambda \mu}^{\nu} = \sum_{\substack{\eta \in S_n\\\psi \trianglelefteq \mu  }} (-1)^{\eta} K_{\mu \psi}$$
\end{proof}

As an example, we can do the computation for $\lambda = (5,3,2),\mu=(4,3,3), \nu= (9,6,5)$ to see that the value of $\eta(\nu + \rho) - (\lambda +\rho)$ hence obtained are $(4,3,3),(5,4,1),(7,3,0),(9,1,0),$ $(7,5,-2),(9,3,-2)$. The last two are not in the Weyl chamber, $(4,3,3) \trianglelefteq \mu$, and none of the others are $\trianglelefteq \mu$, so the only contribution comes from $K_{(4,3,3)(4,3,3)} =1$, which tells us that $c_{(5,3,2)(4,3,3)}^{(9,6,5)}= 1$.

Another way to do the same computation, which gives us a polynomial time algorithm is to firstly find out all the partitions which are $\trianglelefteq \mu$, and in this case the only such partition is $(4,3,3)$. Next we see if $(4,3,3)$ can be written as $\eta(\nu + \rho) - (\lambda +\rho)$, which is true for $\eta$ being the trivial permutation, which tells us that the only contribution comes from $K_{(4,3,3)(4,3,3)} =1$, hence $c_{(5,3,2)(4,3,3)}^{(9,6,5)}= 1$.

\begin{theorem}[\textbf{Stienberg's formula}]
LR-coefficients can be given via the following formula:
$$c_{\lambda \mu}^{\nu} = \sum_{\sigma, \tau \in S_n} (-1)^{\sigma \tau} \mathfrak{P}(\sigma(\lambda +\rho) + \tau (\mu +\rho) - (\nu +2 \rho))$$

where $\mathfrak{P}$ is the Kostant partition function.
\end{theorem}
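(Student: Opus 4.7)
My plan is to feed the Corollary into the Kostant-partition-function expression for Kostka numbers recalled in the Preliminaries, and then to re-index the resulting double sum using the Weyl-invariance of weight multiplicities. The role of this Weyl invariance will be to replace the Corollary's sorted partition $\psi_\eta$ by a more convenient Weyl-conjugate weight, thereby trading the asymmetric formula of the Corollary for the symmetric double Weyl sum that appears in Steinberg's formula.

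In detail, I would first set $v_\eta := \eta(\nu+\rho) - (\lambda+\rho)$ and let $\psi_\eta$ denote its unique dominant representative (which, by Proposition \ref{th:1}, contributes $K_{\mu \psi_\eta}=0$ unless it is a partition satisfying $\psi_\eta \trianglelefteq \mu$). The Corollary then reads $c_{\lambda \mu}^{\nu} = \sum_{\eta \in S_n}(-1)^\eta K_{\mu \psi_\eta}$. Since $\psi_\eta$ and $\eta^{-1} v_\eta = \nu + \rho - \eta^{-1}(\lambda+\rho)$ lie in the same $S_n$-orbit, the Weyl-invariance of weight multiplicities in $V_\mu$ gives $K_{\mu \psi_\eta} = m_\mu(\eta^{-1} v_\eta)$. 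I would then apply the Kostant multiplicity formula to this weight (the natural extension of the paper's Kostka-Kostant identity to arbitrary weight arguments) to obtain
\[ K_{\mu \psi_\eta} = \sum_{\tau \in S_n}(-1)^\tau\, \mathfrak{P}\bigl(\tau(\mu+\rho) + \eta^{-1}(\lambda+\rho) - (\nu+2\rho)\bigr). \]

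Substituting this back into the Corollary's expression and relabelling $\sigma := \eta^{-1}$ (which preserves the sign, since $(-1)^\eta = (-1)^{\eta^{-1}}$) immediately produces
\[ c_{\lambda \mu}^{\nu} = \sum_{\sigma, \tau \in S_n}(-1)^{\sigma \tau}\, \mathfrak{P}\bigl(\sigma(\lambda+\rho) + \tau(\mu+\rho) - (\nu+2\rho)\bigr), \]
which is Steinberg's formula. The one technical point that needs care is the legitimacy of using the Kostka-Kostant identity at the non-dominant tuple $\eta^{-1} v_\eta$ rather than at the partition $\psi_\eta$; the cleanest way to handle this is to invoke the full Kostant multiplicity formula, whose usual Weyl-character-formula derivation goes through verbatim for arbitrary weight arguments. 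Everything else is bookkeeping, and the asymmetry of the Corollary is converted into the symmetric double Weyl sum of Steinberg precisely via this Weyl-invariance step.
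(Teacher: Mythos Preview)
Your proposal is correct and follows essentially the same route as the paper: both start from the Corollary, use the $S_n$-invariance of $K_{\mu,\beta}$ in its second argument (the paper phrases this as ``the partition $\beta$ need not be ordered to define Kostka numbers'', you phrase it as Weyl-invariance of weight multiplicities) to replace $\psi_\eta$ by $\nu+\rho-\eta^{-1}(\lambda+\rho)$, and then substitute the Kostant-partition-function expression for the Kostka number and relabel $\sigma=\eta^{-1}$. The only difference is that you are slightly more explicit about the technical point of applying the Kostka--Kostant identity at a non-partition weight, which the paper handles by the same one-line remark.
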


\begin{proof}
 If $\lambda, \mu$ are partitions of $n$, we know that

$$K_{\alpha \beta} = \sum_{\sigma \in S_n} (-1)^{\sigma} \mathfrak{P}(\sigma(\alpha + \rho) - (\beta +\rho))$$

Also the partition $\beta$ need not be ordered to define Kostka numbers. By the above corollary, we have that

$$c_{\lambda \mu}^{\nu} = \sum_{\eta \in S_n} (-1)^{\eta} K_{\mu  \tau(\eta(\nu + \rho) - (\lambda +\rho))}$$
if $\tau(\eta(\nu + \rho) - (\lambda +\rho)) \trianglelefteq \mu$, then the corresponding Kostka number is zero. Note that $\tau$ was chosen so that $\tau(\eta(\nu + \rho) - (\lambda +\rho))$ is in general position. We could make any choice of $\tau \in S_n$ and keep the formula invariant ( a bit more cumbersome to compute, maybe). We choose $\tau = \eta^{-1}$ for each of the terms, to get 

\begin{align*}
c_{\lambda \mu}^{\nu} &= \sum_{\tau \in S_n} (-1)^{\tau} K_{\mu  \nu + \rho - \tau(\lambda +\rho))}\\
&= \sum_{\tau \in S_n} (-1)^{\tau} \sum_{\sigma \in S_n} (-1)^{\sigma} \mathfrak{P}(\sigma(\mu + \rho) - (\nu + \rho - \tau(\lambda +\rho) +\rho) )\\
&= \sum_{\tau \in S_n} \sum_{\sigma \in S_n} (-1)^{\tau}(-1)^{\sigma} \mathfrak{P}(\sigma(\mu + \rho) + \tau(\lambda +\rho) - (\nu + 2\rho  ))\\
&= \sum_{\tau,\sigma \in S_n}  (-1)^{\tau \sigma} \mathfrak{P}(\sigma(\mu + \rho) + \tau(\lambda +\rho) - (\nu + 2\rho  ))
\end{align*}
 which proves the Stienberg's formula.
\end{proof}

\subsection{\centering Algorithm}

Given a black box which gives us the values of Kostka numbers $K_{\mu \psi}$ in polynomial time , we will give a polynomial time algorithm to compute the LR coefficients. This algorithm formalises the idea of the computation in previous section. We write the steps of the algorithm and then compute the complexity:

\begin{enumerate}
 \item Input  a positive integer $n$  and partitions in the form of an ordered $n-$ touple $\lambda, \mu, \nu $, and initialize the integer $c_{\lambda \mu}^{\nu}=0$. 
 \item List $\psi$ such that $\psi \trianglelefteq \mu$.
 \item For each $\psi$ in the list, check if $\psi =\tau(\eta(\nu + \rho) - (\lambda +\rho))$ for some $\eta, \tau \in S_n$:
 \begin{enumerate}
  \item Set $\psi^{(1)} := (\psi_1 + \lambda_1 +n-1,\psi_2 +  \lambda_1 +n-1, \ldots, \psi_n +  \lambda_1 +n-1)$ and $(\nu+ \rho)^{(1)} : = \nu+ \rho$
  \item If no entries of $\psi^{(1)}$ match with any entries in $(\nu + \rho)^{(1)}$, then $\psi \neq \tau(\eta(\nu + \rho) - (\lambda +\rho))$ for any $\eta, \tau \in S_n$.
  \item Otherwise say the $j^{th}$ entry $\psi_j + \lambda_1 +n-1$ is equal to the $k^{th}$ entry of $(\nu + \rho)$, namely $\nu_k +n -k$.
  \item Set $\psi^{(2)} := (\psi_1 + \lambda_2 +n-2,\psi_2 +  \lambda_1 +n-2, \ldots, \widehat{\psi_j + \lambda_2+n-2},\ldots, \psi_n +  \lambda_2 +n-2)$, where the $j^{th}$ entry has been omitted. Similarly omit the $k^{th}$ entry from $(\nu+ \rho)^{(1)}$ to get $(\nu+ \rho)^{(2)}$.
  \item Repeat Steps $(b),(c),(d)$ on $\psi^{(2)}, (\nu+ \rho)^{(2)}$, to get $\psi^{(3)}, (\nu+ \rho)^{(3)}$, and continue till either no entries match, or we reach $\psi^{(n+1)}, (\nu+ \rho)^{(n+1)}$ which are both empty, hence we stop. In this case, the positions of entries omitted in $(\nu+ \rho)$ gives us the permutation $\eta$.
 \end{enumerate}

 If \texttt{true} then add $(-1)^{\eta} K_{\mu \psi}$  to $c_{\lambda \mu}^{\nu}$. 
 \item Output $c_{\lambda \mu}^{\nu}$. 
\end{enumerate}

\textit{Step 2} can be done in $O(\mathbf{size}(\mu)^n)$ as it involves finding $\psi = (\psi_1,\psi_2, \ldots, \psi_n)$, and $\psi \trianglelefteq \mu$ gives us a rough bound that $\psi_i \leq |\mu|/i$ .

\textit{Step 3} has a loop, where we are trying to find if the given $\psi$ can be written as $(\nu + \rho) - (\lambda +\rho)$, note that $\psi^{(k)}$ are all preordered integer touples, so traversing and finding matching with $(\nu + \rho)^{(k)}$ takes $\sim k$ time, so in total it takes $O(n^2)$ time.

\textit{Step 1} and \textit{Step 4} are input and output, so they are $O(\mathbf{size}(\lambda,\mu, \nu)$.


\begin{conclusion*}
\normalfont
With this algorithm, we get that knowing Kostka numbers, one can generate LR coefficents in polynomial time. We already know via \cite{king2004stretched} that Kostka numbers can be expressed via LR coefficents in linear (polynomial) time, which tells us that they belong to the same class of decision problems. In \cite{narayanan2006complexity}, Hariharan shows that the computational complexity of computing either LR coefficients or Kostka numbers is $\# P-$complete. Our result reinforces this notion by showing polynomial time equivalence between the two.
\end{conclusion*}



\subsection{\centering Acknowledgement}

I express my gratitude to C.S. Rajan for the many valuable discussions while I was working on this paper, to Hariharan Narayan and Piyush Srivastava for informing me about the computational aspects of this problem, and other possible extensions. I also sincerely thanks to  Gaurav Bhatnagar, Manodeep Raha, Soumyadip Sahu ,Ashutosh Shankar for their suggestions. I'm especially thankful to Per Alexandersson for pointing out that my initial result could be used to prove Steinberg's formula. I'm extremely grateful to Ashoka University for warm hospitality and for providing a productive working atmosphere during my visit.

\bibliographystyle{alpha}

\end{document}